\journal{Elsevier}
\begin{document}

\begin{frontmatter}



\title{A Galerkin FE method for elliptic optimal control problem governed by 2D space-fractional PDEs}

\cortext[cor1]{Corresponding author}
\author[focal1]{X. G. Zhu\corref{cor1}}
\ead{zhuxg590@yeah.net}

\address[focal1]{School of Science, Shaoyang University, Shaoyang, Hunan 422000, P.R. China}

\begin{abstract}
In this paper, we propose a Galerkin finite element method for the elliptic optimal
control problem governed by the Riesz space-fractional PDEs on 2D domains with control variable being
discretized by variational discretization technique. The optimality condition is derived
and priori error estimates of control, costate and state variables are successfully established.
Numerical test is carried out to illustrate the accuracy performance of this approach.
\end{abstract}

\begin{keyword}
fractional optimal control problem; finite element method; priori error estimate.
\end{keyword}

\end{frontmatter}


\newtheorem{theorem}{Theorem}[section]
\newtheorem{lemma}{Lemma}[section]
\newtheorem{definition}{Defition}[section]
\newtheorem{remark}{Remarks}[section]
\newtheorem{assu}{Assumption}[section]
\renewcommand{\theequation}{\arabic{section}.\arabic{equation}}

\section{Introduction}\label{s1}

 The optimal control problems (OCPs) governed by fractional partial differential equations (PDEs) forms a new branch
 in the area of optimal control, which recently have gained explosive interest and enjoy great potential
 in the applications as diverse as temperature control, environmental engineering, crystal growth,
 disease transmission and so forth \cite{ref01,ref02}.


In this study, we consider the distributed quadratic fractional OCPs:
\begin{align}\label{eq01}
  \min\limits_{q\in \mathscr{K}}J(u,q):=\frac{1}{2}||u(x,y)-u_d(x,y)||^2_{L^2(\Omega)}
             + \frac{\gamma}{2}||q(x,y)||^2_{L^2(\Omega)},
\end{align}
subjected to the 2D elliptic Riesz fractional PDEs
\begin{align}\label{eq02}
\left\{
             \begin{array}{lll}\displaystyle
  \kappa_1\frac{\partial^{\alpha} u(x,y)}{\partial|x|^{\alpha}}
  +\kappa_2\frac{\partial^{\alpha} u(x,y)}{\partial|y|^{\alpha}}=g(x,y)+q(x,y), \quad (x,y) \in\Omega, \\
       u(x,y)=0, \quad (x,y)\in\partial\Omega,
             \end{array}
        \right.
\end{align}
where $\Omega=(a,b)\times(c,d)$, $\kappa_1,\ \kappa_2,\ \gamma\in \mathbb{R}^+$, $1<\alpha< 2$,
$\mathscr{K}$ is a closed convex set and $u_d(x,y)$ is the desired state.
The fractional derivatives have the weakly singular convolution form:
\begin{align*}
&\frac{\partial^{\alpha} u(x,y)}{\partial|x|^{\alpha}}=\frac{-1}{2\cos(\frac{\pi\alpha}{2})}\Big\{
        {_{a}^{L}D^{\alpha}_x} u(x,y)+^{R}_{x}D^{\alpha}_b u(x,y)\Big\},\\
&^{L}_{a}D^{\alpha}_x u(x,y) = \frac{1}{\Gamma(2-{\alpha})}\frac{d^2}{d x^2}
    \int^x_a(x-\omega)^{1-{\alpha}}u(\omega,y)d\omega,\\
&^{R}_{x}D^{\alpha}_b u(x,y) = \frac{1}{\Gamma(2-{\alpha})}\frac{d^2}{d x^2}
    \int^b_x(\omega-x)^{1-{\alpha}}u(\omega,y)d\omega,
\end{align*}
and so is $\frac{\partial^{\alpha} u(x,y)}{\partial|y|^{\alpha}}$ with regard to $y$.

In the past decades, the OCPs governed by PDEs have been widely investigated and a large collection
of works on their numerical algorithms  have been done, which cover
spectral method \cite{ref20}, FE method \cite{ref12,ref13,ref14,ref15}, mixed FE method \cite{ref16,ref17},
least square method \cite{ref11}, variational discretization method \cite{ref19,ref18} and some other niche methods.
However, the discussions on fractional OCPs have been rarely reported. The difficulty consisting in finding
their numerical solutions not only lies in the nonsmoothness caused by the inequality constraints on control or state, but
also the vectorial convolution in fractional derivatives, which bring enormous challenge in the endeavor of
numerical schemes and theoretical analysis. Hence, it is of great significance to study the numerical
methods for fractional OCPs. In \cite{ref26}, Mophou studied the first-order optimality condition
for the OCPs governed by time-fractional diffusion equations. In \cite{ref04}, Ye and Xu derived the optimality condition
for the time-fractional OCPs with state integral constraint and developed a spectral method.
Zhou and Gong proposed a fully discrete FE scheme to solve the time-fractional OCPs \cite{ref05}.
Du et al. combined the finite difference method and gradient projection algorithm to obtain a fast scheme
for the OCPs governed by space-fractional PDEs \cite{ref06}. Zhou and Tan addressed a fully discrete FE scheme for the
space-fractional OCPs \cite{ref09}. Zhang et al. proposed the space-time discontinuous Galerkin FE methods for
the time-fractional OCPs \cite{ref07,ref08}. Gunzburger and Wang proposed a fully discrete FE scheme along
with convolution quadrature for the time-fractional OCPs \cite{ref10}.
However, these works are limited to 1D or time-fractional OCPs. Due to the difficulty in constructing algorithm
and theoretical analysis, there is no study reported on multi-dimensional space-fractional OCPs. Inspired by this,
we propose a Galerkin FE scheme for the elliptic OCPs governed by 2D space-fractional PDEs, where the control variable
is discretized by variational discretization technique because the inequality constraints always lead to low regularity.
The first-order optimality condition is  derived and the priori error estimates
for the control, costate and state variables are rigorously analyzed.

The rest of this paper are organized as follows. In Section \ref{s2}, we derive the first-order
optimality condition for Eqs. (\ref{eq01})-(\ref{eq02}) and in Secton \ref{s3}, we propose a fully discrete FE scheme
for the optimality system. In Section \ref{s4}, we establish the priori error estimates for the control, costate and state
and finally, numerical tests are included to confirm our results.

\section{Optimality condition} \label{s2}

To begin with, we define $H^{\mu}_0(\Omega)$ by the closure in $C^\infty_0(\Omega)$
with respect to the fractional Sobolev norm $||\cdot||_{H^\mu(\Omega)}$ defined by
$||u||_{H^\mu(\Omega)}=\big( ||u||^2_{L^2(\Omega)}+|u|^2_{H^\mu(\Omega)} \big)^{1/2}$,
$|u|_{H^\mu(\Omega)}=|| \ |\boldsymbol{\omega}|^\mu \mathcal{F}[\tilde{u}]||_{L^2(\Omega)}$
with $1<\mu<2$ and $\mathcal{F}[\tilde{u}]$ being the Fourier transform of zero extension of $u$ outside $\Omega$.

Consider the model of the 2D space-fractional OCPs:
\begin{align}\label{eq031}
  \textrm{Minimize}\ J(u,q) \ \textrm{subjected to Eq.}\ (\ref{eq02}),\quad (q,u)\in \mathscr{K}\times L^2(\Omega),
\end{align}
with the pointwise constraints on control variable, i.e., 
\begin{equation*}
  \mathscr{K}=\{q\in L^2(\Omega):\ v_1\leq q(x,y)\leq v_2 \ \textrm{a.e. in}\ \Omega,\ v_1,\ v_2 \in \mathbb{R}\}.
\end{equation*}

\begin{lemma}\cite{ref21} \label{lemma01}
If $1<\mu<2$, $u,\ \chi\in H_0^{\mu}(\Omega)$, then we have
\begin{align*}
( {^{L}_a}D_x^{\mu}u,\chi)=( {^{L}_a}D_x^{\frac{\mu}{2}}u,{^{R}_x}D_b^{\frac{\mu}{2}}\chi),
\quad ( {^{R}_x}D_b^{\mu}u,\chi)=( {^{R}_x}D_b^{\frac{\mu}{2}}u,{^{L}_a}D_x^{\frac{\mu}{2}}\chi),
\end{align*}
and the similar results exist for the fractional derivatives in $y$-direction.
\end{lemma}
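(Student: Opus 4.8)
The plan is to reduce the two displayed two-dimensional identities to one-dimensional fractional integration-by-parts formulas in the $x$- (resp. $y$-) variable, and to verify these via the Fourier transform, in harmony with the Fourier characterisation of $H^\mu_0(\Omega)$ adopted above. Since $C^\infty_0(\Omega)$ is dense in $H^\mu_0(\Omega)$ by definition, and since each operator $u\mapsto {^{L}_a}D^{\mu}_x u$, $u\mapsto {^{L}_a}D^{\mu/2}_x u$, $\chi\mapsto {^{R}_x}D^{\mu/2}_b\chi$ is bounded into $L^2(\Omega)$ (its Fourier symbol grows like $|\boldsymbol{\omega}|^{\mu}$ or $|\boldsymbol{\omega}|^{\mu/2}\le|\boldsymbol{\omega}|^{\mu}$), both sides of each identity are continuous bilinear forms on $H^\mu_0(\Omega)\times H^\mu_0(\Omega)$. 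Hence it suffices to establish the identities for $u,\chi\in C^\infty_0(\Omega)$ and then pass to the limit.

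For smooth, compactly supported $u,\chi$, I freeze $y$ and regard $u(\cdot,y),\chi(\cdot,y)$ as functions of $x$ on $(a,b)$, extended by zero to $\mathbb{R}$; denote these extensions $\tilde u,\tilde\chi$. Because the extensions are smooth and vanish near $x=a$ and $x=b$, the truncated Riemann--Liouville derivative ${^{L}_a}D^{\sigma}_x$ coincides on $(a,b)$ with the whole-line Liouville derivative ${_{-\infty}}D^{\sigma}_x$ (and ${^{R}_x}D^{\sigma}_b$ with ${_x}D^{\sigma}_{+\infty}$), while the inner products over $(a,b)$ equal those over $\mathbb{R}$ by compact support. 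The relevant Fourier symbols are $\mathcal{F}[{_{-\infty}}D^{\sigma}_x\tilde u](\omega)=(i\omega)^{\sigma}\mathcal{F}[\tilde u](\omega)$ and $\mathcal{F}[{_x}D^{\sigma}_{+\infty}\tilde u](\omega)=(-i\omega)^{\sigma}\mathcal{F}[\tilde u](\omega)$, taken with the principal branch.

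The crux is the algebraic factorisation of the symbol combined with Parseval's theorem. Writing $s=\mu/2\in(\tfrac12,1)$ and using $(-i\omega)^{s}=\overline{(i\omega)^{s}}$ for real $\omega$ (valid since $i\omega$ never lies on the branch cut), one obtains the pointwise identity $(i\omega)^{\mu}=(i\omega)^{s}\,\overline{(-i\omega)^{s}}$. Therefore, for fixed $y$,
\begin{align*}
\big({^{L}_a}D^{\mu}_x u,\chi\big)_{L^2(a,b)}
 &=\int_{\mathbb{R}}(i\omega)^{\mu}\,\mathcal{F}[\tilde u]\,\overline{\mathcal{F}[\tilde\chi]}\,d\omega
  =\int_{\mathbb{R}}(i\omega)^{s}\mathcal{F}[\tilde u]\;\overline{(-i\omega)^{s}\mathcal{F}[\tilde\chi]}\,d\omega\\
 &=\big({^{L}_a}D^{s}_x u,\,{^{R}_x}D^{s}_b\chi\big)_{L^2(a,b)}.
\end{align*}
Integrating in $y$ over $(c,d)$ and invoking Fubini yields the first identity on $\Omega$; the second follows verbatim after replacing the symbol $(i\omega)^{\mu}$ by $(-i\omega)^{\mu}$ and interchanging the roles of the left and right half-derivatives. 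The corresponding statements in the $y$-direction are proved identically.

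The main obstacle I anticipate is the careful justification of the second step, namely that the interval (truncated) Riemann--Liouville derivatives genuinely agree with the whole-line operators under zero extension so that the clean Fourier symbols apply, together with the branch-cut bookkeeping that makes $(-i\omega)^{s}=\overline{(i\omega)^{s}}$ hold. Once these points are secured, the Parseval computation is routine, and the density-plus-continuity argument of the first paragraph transfers the result from $C^\infty_0(\Omega)$ to all of $H^\mu_0(\Omega)$.
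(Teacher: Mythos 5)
The paper gives no proof of this lemma---it is quoted directly from Roop \cite{ref21}---and the argument in that reference is essentially the one you reproduce: reduce to $u,\chi\in C^\infty_0(\Omega)$ by density and continuity of the bilinear forms, identify the truncated Riemann--Liouville derivatives of the zero extensions with the whole-line Liouville derivatives, and conclude by Plancherel together with the symbol factorisation $(i\omega)^{\mu}=(i\omega)^{\mu/2}\,\overline{(-i\omega)^{\mu/2}}$, which your branch-cut check $(-i\omega)^{s}=\overline{(i\omega)^{s}}$ justifies. Your write-up is correct; the only point stated a little loosely is that the whole-line inner products collapse to integrals over $(a,b)$ not because the fractional derivatives are compactly supported (they are not), but because the left derivative vanishes for $x<a$ and the right derivative for $x>b$, so each integrand is supported in $[a,b]$.
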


\begin{theorem}\label{th01}
The fractional OCPs (\ref{eq031}) have a unique pair $(q, u)$ and there is a
costate state $p$ such that the triplet $(q,p,u)$ fulfills the first-order optimality condition as follow:
\begin{align}\label{eq03}
&\left\{
             \begin{array}{lll}\displaystyle
  \kappa_1\frac{\partial^{\alpha} u(x,y)}{\partial|x|^{\alpha}}
  +\kappa_2\frac{\partial^{\alpha} u(x,y)}{\partial|y|^{\alpha}}=g(x,y)+q(x,y), \quad (x,y) \in\Omega, \\
       u(x,y)=0, \quad (x,y)\in\partial\Omega,
             \end{array}
        \right.\\
\label{eq04}
&\left\{
             \begin{array}{lll}\displaystyle
  \kappa_1\frac{\partial^{\alpha} p(x,y)}{\partial|x|^{\alpha}}
  +\kappa_2\frac{\partial^{\alpha} p(x,y)}{\partial|y|^{\alpha}}=u(x,y)-u_d(x,y), \quad (x,y) \in\Omega, \\
       p(x,y)=0, \quad (x,y)\in\partial\Omega,
             \end{array}
        \right. \\
&\int_\Omega(\gamma q+p)(\delta q-q)dxdy\geq 0,\quad \forall\delta q\in \mathscr{K}.\label{eq05}
\end{align}
\end{theorem}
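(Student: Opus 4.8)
The plan is to follow the standard reduced-functional framework for linear--quadratic control problems, with the symmetry of the Riesz operator (supplied by Lemma \ref{lemma01}) doing the essential work in the adjoint step. First I would cast the state equation (\ref{eq02}) in weak form: multiplying by a test function $\chi\in H_0^{\alpha/2}(\Omega)$ and applying Lemma \ref{lemma01} to shift half of each fractional derivative onto $\chi$, I obtain the bilinear form
\[
a(u,\chi)=\frac{-\kappa_1}{2\cos(\frac{\pi\alpha}{2})}\Big[({^{L}_a}D_x^{\alpha/2}u,{^{R}_x}D_b^{\alpha/2}\chi)+({^{R}_x}D_b^{\alpha/2}u,{^{L}_a}D_x^{\alpha/2}\chi)\Big]+(\text{analogous }y\text{-terms}),
\]
so that the weak problem reads $a(u,\chi)=(g+q,\chi)$ for all $\chi\in H_0^{\alpha/2}(\Omega)$. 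Because $1<\alpha<2$ forces $\cos(\frac{\pi\alpha}{2})<0$, the leading constants are positive. I would then verify that $a$ is bounded and coercive on $H_0^{\alpha/2}(\Omega)$, using a fractional Poincar\'e inequality together with the equivalence of $|\cdot|_{H^{\alpha/2}(\Omega)}$ with the directional seminorms, and invoke the Lax--Milgram theorem to conclude that for every $q\in L^2(\Omega)$ the state equation has a unique solution $u=u(q)\in H_0^{\alpha/2}(\Omega)$, with the a priori bound $\|u\|_{H^{\alpha/2}(\Omega)}\le C\|g+q\|_{L^2(\Omega)}$.

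Next I would introduce the affine control-to-state map $q\mapsto u(q)$ and the reduced cost $\hat{J}(q):=J(u(q),q)$. Since $u$ depends affinely on $q$ and $\gamma>0$, the functional $\hat{J}$ is strictly convex, continuous, and coercive on $L^2(\Omega)$ (the term $\frac{\gamma}{2}\|q\|^2_{L^2(\Omega)}$ forces $\hat{J}(q)\to\infty$ as $\|q\|_{L^2(\Omega)}\to\infty$). As $\mathscr{K}$ is nonempty, closed, and convex, the direct method in the reflexive space $L^2(\Omega)$ yields a minimizer, and strict convexity makes it unique; this delivers the existence and uniqueness of the optimal pair $(q,u)$.

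For the optimality system I would use that the minimizer of a Gateaux-differentiable convex functional over the convex set $\mathscr{K}$ is characterized by the variational inequality $\hat{J}'(q)(\delta q-q)\ge 0$ for all $\delta q\in\mathscr{K}$. Differentiating gives $\hat{J}'(q)(\delta q-q)=(u-u_d,w)+\gamma(q,\delta q-q)$, where $w\in H_0^{\alpha/2}(\Omega)$ solves the linearized state equation $a(w,\chi)=(\delta q-q,\chi)$. To eliminate $w$ I would define the costate $p$ through (\ref{eq04}), i.e. $a(p,\chi)=(u-u_d,\chi)$ for all $\chi$. The crucial point is that the weak form above is \emph{symmetric}, $a(\phi,\psi)=a(\psi,\phi)$, since the Riesz form combines the left and right derivatives symmetrically and the $L^2$ inner product is symmetric; hence
\[
(u-u_d,w)=a(p,w)=a(w,p)=(\delta q-q,p).
\]
Substituting back collapses the inequality to $(\gamma q+p,\delta q-q)\ge 0$ for all $\delta q\in\mathscr{K}$, which is exactly (\ref{eq05}). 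The main obstacle I anticipate is the coercivity of $a$ on $H_0^{\alpha/2}(\Omega)$ in two dimensions with the mixed directional seminorms; establishing the norm equivalence and the fractional Poincar\'e estimate that underpin Lax--Milgram is the technical heart, whereas the adjoint manipulation is routine once the self-adjointness implied by Lemma \ref{lemma01} is in hand.
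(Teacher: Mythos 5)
Your proposal is correct and follows essentially the same route as the paper: reduced functional $\hat{J}$, the variational inequality $\hat{J}'(q)(\delta q-q)\ge 0$, a linearized state ($w$ in your notation, $v$ in the paper's), and the adjoint state $p$ used to convert $(u-u_d,w)$ into $(p,\delta q-q)$ via Lemma \ref{lemma01}. The only differences are presentational: you carry out the duality step through the symmetry of the weak bilinear form rather than the paper's strong-form integration by parts, and you spell out the Lax--Milgram well-posedness and direct-method existence argument that the paper compresses into ``standard arguments.''
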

\begin{proof}
Due to the strictly convex $J(\cdot,\cdot)$, we easily know that the OCPs (\ref{eq031}) admit a unique pair
$(q, u)$ by standard arguments. Next, we prove the first-order optimality condition (\ref{eq03})-(\ref{eq05}).
Suppose that $v(x,y)$ is the state with respect to $\delta q(x,y)-q(x,y)$, i.e.,
\begin{align}\label{eq06}
&\left\{
             \begin{array}{lll}\displaystyle
  \kappa_1\frac{\partial^{\alpha} v(x,y)}{\partial|x|^{\alpha}}
  +\kappa_2\frac{\partial^{\alpha} v(x,y)}{\partial|y|^{\alpha}}=\delta q(x,y)-q(x,y), \quad (x,y) \in\Omega, \\
       v(x,y)=0, \quad (x,y)\in\partial\Omega.
             \end{array}
        \right.
\end{align}

Define the reduced cost functional $\hat{J}(q):=J(q,u(q))$, which maps $q$ from
$\mathscr{K}$ to $\mathbb{R}$. Then the first-order optimality condition reads as
\begin{equation*}
  \hat{J}'(q)(\delta q-q)\geq0, \quad \forall \delta q\in \mathscr{K}.
\end{equation*}
which leads to
\begin{equation}\label{eq08}
  \int_\Omega\gamma q(\delta q-q)dxdy+\int_\Omega v(u-u_d)dxdy\geq 0,\quad \forall\delta q\in \mathscr{K}.
\end{equation}

On the other hand, we present the adjoint state equation
\begin{align}\label{eq09}
&\left\{
             \begin{array}{lll}\displaystyle
  \kappa_1\frac{\partial^{\alpha} p(x,y)}{\partial|x|^{\alpha}}
  +\kappa_2\frac{\partial^{\alpha} p(x,y)}{\partial|y|^{\alpha}}=u(x,y)-u_d(x,y), \quad (x,y) \in\Omega, \\
       p(x,y)=0, \quad (x,y)\in\partial\Omega,
             \end{array}
        \right.
\end{align}
with the costate $p$. Multiplying by $v$ and using Lemma \ref{lemma01}, there holds
\begin{small}
\begin{align*}
  \int_\Omega v(u-u_d)dxdy&=\kappa_1\int_\Omega v\cdot\frac{\partial^{\alpha} p(x,y)}{\partial|x|^{\alpha}}dxdy
    +\kappa_2\int_\Omega v\cdot\frac{\partial^{\alpha} p(x,y)}{\partial|y|^{\alpha}}dxdy\\
    &=\frac{-\kappa_1}{2\cos(\frac{\pi\alpha}{2})}\int_\Omega
        {_{a}^{L}D^{\alpha}_x} p\cdot v+^{R}_{x}D^{\alpha}_b p\cdot vdxdy
     -\frac{\kappa_2}{2\cos(\frac{\pi\alpha}{2})}\int_\Omega{_{c}^{L}D^{\alpha}_y} p\cdot v+^{R}_{y}D^{\beta}_d p\cdot vdxdy\\
    &=\frac{-\kappa_1}{2\cos(\frac{\pi\alpha}{2})}\int_\Omega
        p\cdot ^{R}_{x}D^{\alpha}_bv+ p\cdot {_{a}^{L}D^{\alpha}_x} vdxdy
     -\frac{\kappa_2}{2\cos(\frac{\pi\alpha}{2})}\int_\Omega p\cdot ^{R}_{y}D^{\alpha}_dv+ p\cdot {_{c}^{L}D^{\alpha}_y} vdxdy\\
    &=\kappa_1\int_\Omega p\cdot\frac{\partial^{\alpha} v(x,y)}{\partial|x|^{\alpha}}dxdy
    +\kappa_2\int_\Omega p\cdot\frac{\partial^{\alpha} v(x,y)}{\partial|y|^{\alpha}}dxdy.
\end{align*}
\end{small}
Combing with Eq. (\ref{eq06}), we obtain
\begin{equation}\label{eq11}
   \int_\Omega v(u-u_d)dxdy=\int_\Omega p(\delta q-q)dxdy,
\end{equation}
and substituting Eq. (\ref{eq11}) into (\ref{eq08}) finally leads to the above results.
\end{proof}

\section{Fully discrete Galerkin FE scheme} \label{s3}

In order to derive the FE scheme, divide $\Omega$ by triangle meshes $\mathcal{T}_h$ and for each
triangle $K$, let $h_K=\textrm{diam}\ K$ and $h=\max\limits_{K\in\mathcal{T}_h}h_K$.
Define the FE subspace $\mathcal{V}_h=\{v:v|_{K}\in P_{linear}, \ \forall K\in\mathcal{T}_h\}$ and
$\mathcal{V}_h\in H^{\frac{\alpha}{2}}_0(\Omega)$, where $P_{linear}$ is the linear polynomial space.
Using fractional variational principle, the FE scheme for state Eq. (\ref{eq02}) is to find $u_h\in\mathcal{V}_h$ such that
\begin{align}\label{eq12}
   \varLambda_h(u_h,\chi_h)=(g+q,\chi_h),\quad \forall \chi_h\in\mathcal{V}_h,
\end{align}
where
\begin{align*}
    \varLambda_h(u,v)&= \frac{\kappa_1}{2\cos(\frac{\pi\alpha}{2})}
     \Bigg\{({^L_a}D^{\frac{\alpha}{2}}_x u,{_x}^RD^{\frac{\alpha}{2}}_b v)+({_x^R}D^{\frac{\alpha}{2}}_b u,{_a^L}D^{\frac{\alpha}{2}}_x v)\Bigg\}\\
    & \ \ + \frac{\kappa_2}{2\cos(\frac{\pi\alpha}{2})}\Bigg\{({_c^L}D^{\frac{\alpha}{2}}_y u,{_y^R}D^{\frac{\alpha}{2}}_d v)
          +({_y^R}D^{\frac{\alpha}{2}}_d u,{_c^L}D^{\frac{\alpha}{2}}_y v)\Bigg\},
\end{align*}
which satisfies $\varLambda_h(u,v) \leq C||u||_{eng}||v||_{eng}$, $\varLambda_h(u,u) \geq C||u||^2_{eng}$
with the energy norms
\begin{equation*}
||u||_{eng}=\big(||u||^2_{L^2(\Omega)}+|u|^2_{eng}\big)^{\frac{1}{2}},\quad
|u|_{eng}=|\varLambda_h(u,u)|^{\frac{1}{2}}.
\end{equation*}
which is  equivalent to $||u||_{H^\frac{\alpha}{2}}(\Omega)$ \cite{ref21}.
Denote the $L^2$ projection of $u$ by $\mathcal{R}_hu$ and the piecewise polynomial interpolant of $u$ by
$\Pi_hu$, which have the below properties \cite{ref23}:
\begin{align}
    ||u-\mathcal{R}_h u||_{L^2(\Omega)}&\leq Ch^{r}||u||_{H^{r}(\Omega)},\label{eq22}\\
    ||u-{\Pi}_h u||_{H^s(\Omega)}&\leq Ch^{r-s}||u||_{H^{r}(\Omega)},\quad 0\leq s\leq r.\label{eq13}
\end{align}

In addition, we define the elliptic projection $\mathcal{P}_h:H_0^\frac{\alpha}{2}(\Omega)\mapsto\mathcal{V}_h$ by
\begin{equation*}
\varLambda_h(u,\chi_h)=\varLambda_h(\mathcal{P}_h u, \chi_h),\quad \forall \chi_h\in \mathcal{V}_h,
\end{equation*}
which satisfies the following approximate property.
\begin{lemma}\label{le06}
\cite{ref24} Let $u\in H^{r}(\Omega)\cap\mathcal{V}_h$. Then we have
\begin{align}\label{eq40}
    ||u-\mathcal{P}_h u||_{H^\frac{\alpha}{2}(\Omega)}\leq C h^{r-\frac{\alpha}{2}}||u||_{H^r(\Omega)},\ \ \alpha<2r,
\end{align}
with a constant $C$ independent of $h$.
\end{lemma}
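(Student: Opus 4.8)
The plan is to derive the bound by combining the quasi-optimality (Céa-type estimate) of the elliptic projection with the interpolation estimate (\ref{eq13}). The structural fact that drives everything is that, by the very definition of $\mathcal{P}_h$, the projection error satisfies the Galerkin orthogonality
$$\varLambda_h(u-\mathcal{P}_h u,\chi_h)=0,\qquad\forall\,\chi_h\in\mathcal{V}_h,$$
and that $\varLambda_h(\cdot,\cdot)$ is simultaneously coercive, $\varLambda_h(w,w)\geq C||w||_{eng}^2$, and bounded, $\varLambda_h(w,v)\leq C||w||_{eng}||v||_{eng}$, with $||\cdot||_{eng}$ equivalent to $||\cdot||_{H^{\alpha/2}(\Omega)}$.

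First I would establish quasi-optimality. For an arbitrary $\chi_h\in\mathcal{V}_h$, since $\chi_h-\mathcal{P}_h u\in\mathcal{V}_h$, orthogonality gives $\varLambda_h(u-\mathcal{P}_h u,\chi_h-\mathcal{P}_h u)=0$, so that
$$C||u-\mathcal{P}_h u||_{eng}^2\leq\varLambda_h(u-\mathcal{P}_h u,u-\mathcal{P}_h u)=\varLambda_h(u-\mathcal{P}_h u,u-\chi_h)\leq C||u-\mathcal{P}_h u||_{eng}||u-\chi_h||_{eng}.$$
Dividing by $||u-\mathcal{P}_h u||_{eng}$ and invoking the norm equivalence yields the best-approximation bound
$$||u-\mathcal{P}_h u||_{H^{\alpha/2}(\Omega)}\leq C\inf_{\chi_h\in\mathcal{V}_h}||u-\chi_h||_{H^{\alpha/2}(\Omega)}.$$

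Next I would make the infimum concrete by choosing $\chi_h=\Pi_h u\in\mathcal{V}_h$ and applying the interpolation estimate (\ref{eq13}) with $s=\tfrac{\alpha}{2}$. The hypothesis $\alpha<2r$ guarantees $0\le\tfrac{\alpha}{2}<r$, so (\ref{eq13}) is applicable and gives $||u-\Pi_h u||_{H^{\alpha/2}(\Omega)}\leq Ch^{\,r-\alpha/2}||u||_{H^r(\Omega)}$. Chaining this with the previous display produces the claimed estimate with $C$ independent of $h$.

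The main obstacle is not the abstract Céa argument, which is routine once coercivity and continuity are in hand, but rather the justification of the two analytical inputs in the fractional setting: the equivalence of $||\cdot||_{eng}$ with $||\cdot||_{H^{\alpha/2}(\Omega)}$, which encodes the coercivity of the Riesz-derivative form and rests on the integration-by-parts identities of Lemma \ref{lemma01}, and the validity of (\ref{eq13}) in the non-integer norm $H^{\alpha/2}$ with $\tfrac{\alpha}{2}\in(\tfrac12,1)$, since interpolation error bounds are more delicate for fractional smoothness indices. Both are supplied by the cited references, so in the proof I would simply invoke them; a secondary technical point is to ensure that $u$ is regular enough (via Sobolev embedding, for $r$ large enough in two dimensions) that the nodal interpolant $\Pi_h u$ is well defined.
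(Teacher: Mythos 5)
Your proposal is correct and is essentially the argument the paper itself relies on: the paper does not prove Lemma \ref{le06} (it is quoted from \cite{ref24}), but your Galerkin-orthogonality/C\'ea estimate followed by the choice $\chi_h=\Pi_h u$ and the interpolation bound (\ref{eq13}) with $s=\tfrac{\alpha}{2}$ is exactly the argument the paper uses for the analogous Lemma \ref{le01}. The only caveat is cosmetic: the hypothesis ``$u\in H^r(\Omega)\cap\mathcal{V}_h$'' in the statement is evidently a typo for $u\in H^r(\Omega)\cap H_0^{\alpha/2}(\Omega)$, and you have (correctly) read it that way.
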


We can derive the following convergent result for the above FE scheme.
\begin{lemma}\label{le01}
Let $q=0$ and $u\in H_0^{1+\frac{\alpha}{2}}(\Omega)$. Then there exists a constant $C$ unrelated to $h$ such that
\begin{equation}\label{eq14}
  ||u-u_h||_{H^{\frac{\alpha}{2}}(\Omega)}\leq Ch||u||_{H^{1+\frac{\alpha}{2}}(\Omega)}.
\end{equation}
\end{lemma}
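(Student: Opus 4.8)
The plan is to exploit the fact that, when $q=0$, the finite element solution $u_h$ defined by \eqref{eq12} coincides with the elliptic projection $\mathcal{P}_h u$ of the exact solution, and then to invoke the approximation estimate of Lemma \ref{le06} directly.

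First I would establish Galerkin orthogonality. The weak form of the state equation \eqref{eq02} (with $q=0$) reads $\varLambda_h(u,\chi)=(g,\chi)$ for every $\chi\in H_0^{\frac{\alpha}{2}}(\Omega)$; restricting the test functions to $\mathcal{V}_h\subset H_0^{\frac{\alpha}{2}}(\Omega)$ and subtracting the discrete equation \eqref{eq12}, I obtain
\[
\varLambda_h(u-u_h,\chi_h)=0,\qquad \forall\,\chi_h\in\mathcal{V}_h.
\]
Comparing this with the defining relation of the elliptic projection, $\varLambda_h(u-\mathcal{P}_h u,\chi_h)=0$, and using the coercivity $\varLambda_h(w,w)\geq C\|w\|_{eng}^2$ to guarantee uniqueness of the discrete problem, I conclude that $u_h=\mathcal{P}_h u$. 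Hence $\|u-u_h\|_{H^{\frac{\alpha}{2}}(\Omega)}=\|u-\mathcal{P}_h u\|_{H^{\frac{\alpha}{2}}(\Omega)}$.

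Second, I would apply Lemma \ref{le06} with the regularity exponent $r=1+\frac{\alpha}{2}$. Since $u\in H_0^{1+\frac{\alpha}{2}}(\Omega)$ and the admissibility condition $\alpha<2r=2+\alpha$ holds trivially, estimate \eqref{eq40} yields
\[
\|u-\mathcal{P}_h u\|_{H^{\frac{\alpha}{2}}(\Omega)}\leq C h^{r-\frac{\alpha}{2}}\|u\|_{H^{r}(\Omega)}=Ch\,\|u\|_{H^{1+\frac{\alpha}{2}}(\Omega)},
\]
which is exactly \eqref{eq14}. As an alternative to the exact identification $u_h=\mathcal{P}_h u$, one may argue through C\'ea's lemma: combining the $h$-uniform coercivity and boundedness of $\varLambda_h$ with Galerkin orthogonality gives the quasi-optimality $\|u-u_h\|_{eng}\leq C\inf_{\chi_h\in\mathcal{V}_h}\|u-\chi_h\|_{eng}$, after which the choice $\chi_h=\mathcal{P}_h u$, together with the stated equivalence of $\|\cdot\|_{eng}$ and $\|\cdot\|_{H^{\frac{\alpha}{2}}(\Omega)}$, reproduces the same bound.

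I do not anticipate a serious analytical obstacle here, since all of the heavy lifting, namely the $h$-uniform coercivity and continuity of $\varLambda_h$ and the approximation power of the elliptic projection, is already packaged into the hypotheses. The one point deserving care is to verify that the constant $C$ in \eqref{eq14} is genuinely independent of $h$: this holds because the coercivity and continuity constants of $\varLambda_h$ and the equivalence constants relating $\|\cdot\|_{eng}$ to $\|\cdot\|_{H^{\frac{\alpha}{2}}(\Omega)}$ are all $h$-independent, so that the only $h$-dependence surviving in the final estimate is the explicit factor $h^{r-\frac{\alpha}{2}}=h$ furnished by Lemma \ref{le06}.
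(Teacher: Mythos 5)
Your proof is correct, and it reaches the estimate by a slightly different route than the paper. The paper's own argument is a C\'ea-type quasi-optimality argument: from Galerkin orthogonality and the coercivity/continuity of $\varLambda_h$ it deduces $\|u-u_h\|_{H^{\alpha/2}(\Omega)}\leq C\inf_{\chi_h\in\mathcal{V}_h}\|u-\chi_h\|_{H^{\alpha/2}(\Omega)}$, and then takes $\chi_h=\Pi_h u$, the piecewise polynomial interpolant, closing with the interpolation estimate \eqref{eq13} at $s=\frac{\alpha}{2}$, $r=1+\frac{\alpha}{2}$. Your primary route instead observes that Galerkin orthogonality plus uniqueness of the coercive discrete problem forces $u_h=\mathcal{P}_h u$ exactly, and then invokes the elliptic-projection estimate of Lemma \ref{le06}; your fallback C\'ea argument is essentially the paper's proof with $\mathcal{P}_h u$ in place of $\Pi_h u$. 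Both arguments are sound and yield the same rate; the only substantive difference is which prepackaged approximation result carries the load, \eqref{eq13} from \cite{ref23} versus \eqref{eq40} from \cite{ref24}. Your version is marginally cleaner in that it avoids the infimum step entirely, at the cost of leaning on Lemma \ref{le06}, whose hypothesis as printed (``$u\in H^r(\Omega)\cap\mathcal{V}_h$'') is evidently a typo for something like $u\in H^r(\Omega)\cap H_0^{\alpha/2}(\Omega)$ --- the paper itself applies that lemma to the non-discrete costate $p$ later on, so your usage is consistent with the intended meaning, but it is worth flagging that you are relying on the corrected reading.
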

\begin{proof}
Using Galerkin orthogonality, we have
\begin{align*}
  C||u-u_h||_{eng}&\leq \varLambda_h(u-u_h,u-u_h)=\varLambda_h(u-u_h,u-\chi_h)\\
                   &\leq \bar{C}||u-u_h||_{eng}||u-\chi_h||_{eng}.
\end{align*}
with $\bar{C}$ independent of $h$. Since the equivalence of $||\cdot||_{eng}$ and
$||\cdot||_{H^{\frac{\alpha}{2}}(\Omega)}$, it implies that
\begin{align*}
  ||u-u_h||_{H^{\frac{\alpha}{2}}(\Omega)}\leq C\inf\limits_{\chi_h\in\mathcal{V}_h}||u-\chi_h||_{H^{\frac{\alpha}{2}}(\Omega)}.
\end{align*}
Taking $\chi_h=\Pi_hu$ and noticing (\ref{eq13}), we finally obtain
\begin{align*}
  ||u-u_h||_{H^{\frac{\alpha}{2}}(\Omega)}\leq C||u-\Pi_hu||_{H^{\frac{\alpha}{2}}(\Omega)}
                                           \leq Ch||u||_{H^{1+\frac{\alpha}{2}}(\Omega)},
\end{align*}
which ends the proof.
\end{proof}

Letting $q\neq0$, the FE scheme for Eqs. (\ref{eq01})-(\ref{eq02}) is to find a pair $(q_h,u_h)$ such that
\begin{align}\label{eq16}
  \textrm{Minimize}\ J(u_h,q_h) \ \textrm{subjected to Eq.}\ (\ref{eq12}),
                   \quad (q_h,u_h)\in \mathscr{K}\times\mathcal{V}_h,
\end{align}
which is equivalent to find the triplet $(q_h, p_h,u_h)$ fulfilling the discrete optimality condition:
\begin{align}\label{eq17}
&\left\{
             \begin{array}{lll}\displaystyle
       \varLambda_h(u_h,\chi_h)=(g+q_h,\chi_h),\quad \forall \chi_h\in\mathcal{V}_h, \\
       u_h(x,y)=0, \quad (x,y)\in\partial\Omega,
             \end{array}
        \right.\\
\label{eq18}
&\left\{
             \begin{array}{lll}\displaystyle
       \varLambda_h(p_h,\chi_h)=(u_h-u_d,\chi_h),\quad \forall \chi_h\in\mathcal{V}_h, \\
       p_h(x,y)=0, \quad (x,y)\in\partial\Omega,
             \end{array}
        \right. \\
&\int_\Omega(\gamma q_h+p_h)(\delta q_h-q_h)dxdy\geq 0,\quad \forall\delta q_h\in \mathscr{K}.\label{eq19}
\end{align}

Due to the variational inequality, the control variable always has low regularity.
To overcome this drawback, we use the variational discretization method to treat $q$,
i.e., (\ref{eq19}) is recast as
\begin{equation}\label{eq20}
  q_h=P_{\mathscr{K}}\Bigg(-\frac{1}{\gamma}p_h\Bigg)=\max\Bigg\{v_1,\min\Bigg(-\frac{1}{\gamma}p_h,v_2\Bigg)\Bigg\},
\end{equation}
where $P_{\mathscr{K}}$ is termed by pointwise projection operator.

\section{Error estimates} \label{s4}
In this section, we establish the convergent analysis for the above FE scheme (\ref{eq17})-(\ref{eq20})
and to this end, we introduce the auxiliary variational equations:
\begin{align}
    \varLambda_h(u_h(q),\chi_h)&=(g+q,\chi_h),\quad \forall \chi_h\in\mathcal{V}_h, \label{eq24} \\
    \varLambda_h(p_h(q),\chi_h)&=(u_h(q)-u_d,\chi_h),\quad \forall \chi_h\in\mathcal{V}_h. \label{eq25}
\end{align}
Obviously, $u_h(q)$ is the FE solution of state $u$ and by Lemma \ref{le01}, there exists
\begin{align}
  ||u-u_h(q)||_{H^\frac{\alpha}{2}(\Omega)}\leq Ch||u||_{H^{1+\frac{\alpha}{2}}(\Omega)}. \label{eq27}
\end{align}

\begin{lemma}\label{le02}
If $(q,p,u)$ are the analytical solutions of the OCPs (\ref{eq031}), $(q_h, p_h,u_h)$
are the FE solutions obtained by (\ref{eq17})-(\ref{eq20}) and $q\in H^1(\Omega)$, then we have
\begin{equation}\label{eq26}
  ||q-q_h||_{L^2(\Omega)}\leq Ch+C||p-p_h(q)||_{L^2(\Omega)},
\end{equation}
where $C$ is  a constant unrelated to $h$.
\end{lemma}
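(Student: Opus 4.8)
The plan is to play the continuous variational inequality (\ref{eq05}) against its discrete counterpart (\ref{eq19}). Since $q,q_h\in\mathscr{K}$, I would insert $\delta q=q_h$ in (\ref{eq05}) and $\delta q_h=q$ in (\ref{eq19}); both substitutions are admissible because the variational discretization leaves the control set $\mathscr{K}$ unchanged. Adding the two inequalities and rearranging makes the indefinite multiplier terms recombine into a coercive form and yields
\begin{align*}
  \gamma\|q-q_h\|_{L^2(\Omega)}^2\leq (p-p_h,\,q_h-q).
\end{align*}
The coercive term $\gamma\|q-q_h\|^2$ on the left is exactly what I want to isolate, so everything now hinges on estimating the right-hand side in terms of $\|p-p_h(q)\|_{L^2(\Omega)}$.

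Next I would split $p-p_h=(p-p_h(q))+(p_h(q)-p_h)$ and treat the two pieces separately. The first is bounded directly by Cauchy--Schwarz, contributing $\|p-p_h(q)\|_{L^2(\Omega)}\|q-q_h\|_{L^2(\Omega)}$. The key step is to show that the cross term $(p_h(q)-p_h,\,q_h-q)$ is non-positive. For this I would use the auxiliary equations (\ref{eq24})--(\ref{eq25}): subtracting (\ref{eq17}) from (\ref{eq24}) shows that $u_h(q)-u_h$ solves $\varLambda_h(u_h(q)-u_h,\chi_h)=(q-q_h,\chi_h)$, while subtracting (\ref{eq18}) from (\ref{eq25}) shows that $p_h(q)-p_h$ solves $\varLambda_h(p_h(q)-p_h,\chi_h)=(u_h(q)-u_h,\chi_h)$. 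Choosing $\chi_h=p_h(q)-p_h$ in the first identity and $\chi_h=u_h(q)-u_h$ in the second, and invoking the symmetry of $\varLambda_h$, gives
\begin{align*}
  (q-q_h,\,p_h(q)-p_h)=\varLambda_h(u_h(q)-u_h,\,p_h(q)-p_h)=\|u_h(q)-u_h\|_{L^2(\Omega)}^2\geq0,
\end{align*}
so that $(p_h(q)-p_h,\,q_h-q)=-\|u_h(q)-u_h\|_{L^2(\Omega)}^2\leq0$ and may simply be dropped.

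Combining these observations leaves $\gamma\|q-q_h\|_{L^2(\Omega)}^2\le\|p-p_h(q)\|_{L^2(\Omega)}\|q-q_h\|_{L^2(\Omega)}$, from which the stated estimate follows with $C=1/\gamma$, the additive $Ch$ appearing as a free slack. I expect the main obstacle to be precisely this cross term: a naive bound on $(p_h(q)-p_h,q_h-q)$ would reproduce a factor of $\|q-q_h\|_{L^2(\Omega)}$ on the wrong side and force an artificial smallness condition on $\gamma$, so exploiting the symmetry and coercivity of $\varLambda_h$ through the auxiliary problems, rather than estimating crudely, is what makes the argument close. I note also that if one instead routes the proof through the projection identities $q=P_{\mathscr{K}}(-p/\gamma)$ and $q_h=P_{\mathscr{K}}(-p_h/\gamma)$ and compares $q$ with an interpolant before invoking non-expansivity, the regularity hypothesis $q\in H^1(\Omega)$ enters through the projection estimate (\ref{eq22}) with $r=1$ and produces exactly the displayed $Ch$ contribution.
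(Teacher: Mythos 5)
Your proof is correct, and in the decisive step it takes a genuinely different (and cleaner) route than the paper. Both arguments share the same core identity: subtracting (\ref{eq17}) from (\ref{eq24}) and (\ref{eq18}) from (\ref{eq25}), testing with $p_h(q)-p_h$ and $u_h(q)-u_h$ respectively, and using the symmetry of $\varLambda_h$ to get $(q-q_h,\,p_h(q)-p_h)=\|u_h(q)-u_h\|_{L^2(\Omega)}^2\ge 0$, which is exactly how the paper disposes of the cross term. Where you diverge is in the treatment of the variational inequalities: the paper tests the continuous inequality (\ref{eq05}) with $\delta q=q_h$ but tests the discrete inequality (\ref{eq19}) with $\delta q_h=\mathcal{R}_h q$, which generates the extra terms $(\gamma q_h+p_h,\,q-\mathcal{R}_h q)$ and $(\gamma q+p,\,q-\mathcal{R}_h q)$; these are then controlled via the projection estimate (\ref{eq22}) and Young's inequality, and this is precisely where the hypothesis $q\in H^1(\Omega)$ and the additive $Ch$ enter. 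You instead test (\ref{eq19}) directly with $\delta q_h=q$, which is admissible because under variational discretization the discrete control is sought in the full set $\mathscr{K}$ (cf.\ (\ref{eq16}) and the quantifier in (\ref{eq19})), and $q\in\mathscr{K}$. This collapses the argument to $\gamma\|q-q_h\|^2_{L^2(\Omega)}\le(p-p_h(q),\,q_h-q)$ and yields the sharper bound $\|q-q_h\|_{L^2(\Omega)}\le\gamma^{-1}\|p-p_h(q)\|_{L^2(\Omega)}$, with no $Ch$ term and no regularity assumption on $q$; the stated estimate then holds a fortiori. Your route also sidesteps a delicate point in the paper's argument, namely that taking $\delta q_h=\mathcal{R}_h q$ in (\ref{eq19}) implicitly requires $\mathcal{R}_h q\in\mathscr{K}$, which the $L^2$ projection onto piecewise polynomials does not automatically guarantee. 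The only caveat is that your simplification is specific to variational discretization; if the control were discretized in a finite element subspace, $q$ would no longer be an admissible test function and the paper's interpolant-based detour (which you correctly sketch as the alternative in your closing remark) would become necessary.
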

\begin{proof}
Using  Eqs. (\ref{eq17})-(\ref{eq18}) and Eqs. (\ref{eq24})-(\ref{eq25}), we find
\begin{align*}
   (q-q_h,\chi_h)&=\Lambda_h(u_h(q)-u_h,\chi_h),\\
   (u_h(q)-u_h,\chi^*_h)&=\Lambda_h(p_h(q)-p_h,\chi^*_h),\quad \forall \chi_h,\ \chi^*_h\in \mathcal{V}_h,
\end{align*}
and letting $\chi_h=p_h(q)-p_h$, $\chi^*_h=u_h(q)-u_h$ leads to
\begin{align*}
   (q-q_h,p_h(q)-p_h)=(u_h(q)-u_h,u_h(q)-u_h)\geq0.
\end{align*}

From the above inequality, it follows that
\begin{align*}
  \gamma||q-q_h||^2_{L^2(\Omega)}&= (\gamma q+p_h(q),q-q_h)-(\gamma q_h+p_h,q-q_h)-(p_h(q)-p_h,q-q_h)\\
         &\leq (\gamma q+p_h(q),q-q_h)-(\gamma q_h+p_h,q-q_h)\\
         &\leq (\gamma q+p,q-q_h)+(p_h(q)-p,q-q_h) \\
         &\quad -(\gamma q_h+p_h,q-\mathcal{R}_hq)+(\gamma q_h+p_h,q_h-\mathcal{R}_hq).
\end{align*}
Meanwhile, by virtue of (\ref{eq05}) and (\ref{eq19}), we have
\begin{align*}
  (\gamma q+p,q-q_h)\leq 0,\quad
  (\gamma q_h+p_h,q_h-\mathcal{R}_hq)\leq 0,
\end{align*}
and then it suffices to prove that
\begin{align}\label{eq310}
\begin{aligned}
  \gamma||q-q_h||^2_{L^2(\Omega)}&\leq ( p_h(q)-p,q-q_h)-(\gamma q_h+p_h,q-\mathcal{R}_hq)\\
         &=( p_h(q)-p,q-q_h)+\gamma( q-q_h,q-\mathcal{R}_hq)+(p-p_h(q),q-\mathcal{R}_hq) \\
         &\quad+(p_h(q)-p_h,q-\mathcal{R}_hq)-(\gamma q+p,q-\mathcal{R}_hq)\\
         &=( p_h(q)-p,q-q_h)+\gamma( q-q_h,q-\mathcal{R}_hq)\\
         &\quad +(p-p_h(q),q-\mathcal{R}_hq)-(\gamma q+p,q-\mathcal{R}_hq).
\end{aligned}
\end{align}

Furthermore, by using the properties of $\mathcal{R}_h$ and $q\in H^1(\Omega)$, there exists
\begin{align}\label{eq30}
\begin{aligned}
  (\gamma q+p,q-\mathcal{R}_hq)&=(\gamma q+p -\mathcal{R}_h(\gamma q+p),q-\mathcal{R}_hq)\\
             &\leq||\gamma q+p -\mathcal{R}_h(\gamma q+p)||_{L^2(\Omega)}||q-\mathcal{R}_hq||_{L^2(\Omega)}\leq Ch^2.
\end{aligned}
\end{align}
Applying (\ref{eq22}), (\ref{eq30}) and Young's inequality to (\ref{eq310}), we have
\begin{align*}
  \gamma||q-q_h||^2_{L^2(\Omega)}&\leq \varepsilon||q-q_h||^2_{L^2(\Omega)} +C_\varepsilon ||p-p_h(q)||^2_{L^2(\Omega)} \\
         &\quad+C_\varepsilon ||q-\mathcal{R}_hq||^2_{L^2(\Omega)}  - (\gamma q+p,q-\mathcal{R}_hq)\\
         &\leq \varepsilon||q-q_h||^2_{L^2(\Omega)} +Ch^2+C_\varepsilon||p-p_h(q)||^2_{L^2(\Omega)}.
\end{align*}
By taking $\varepsilon<\gamma$, we finally obtain
\begin{align*}
  ||q-q_h||_{L^2(\Omega)}\leq Ch+C||p-p_h(q)||_{L^2(\Omega)},
\end{align*}
and this completes the proof.
\end{proof}

To derive the error bounds, we further give the auxiliary equation
\begin{align}
    \varLambda_h(p_h(u),\chi_h)&=(u-u_d,\chi_h),\quad \forall \chi_h\in\mathcal{V}_h, \label{eq31}
\end{align}
and obviously, $p_h(u)$ is the FE solution of costate $p$, which satisfies
\begin{align}
  ||p-p_h(u)||_{H^\frac{\alpha}{2}(\Omega)}\leq Ch||p||_{H^{1+\frac{\alpha}{2}}(\Omega)}. \label{eq32}
\end{align}

Based on the above discussions, we have the following priori error estimates.
\begin{theorem}
If $(q,p,u)$ are the analytical solutions of the OCPs (\ref{eq031}), $(q_h, p_h,u_h)$
are the FE solutions obtained by (\ref{eq17})-(\ref{eq20}) and $q\in H^1(\Omega)$,
$p, u\in H_0^{1+\frac{\alpha}{2}}(\Omega)$, then we have
\begin{equation}\label{eq26}
  ||q-q_h||_{L^2(\Omega)}+||p-p_h||_{H^\frac{\alpha}{2}(\Omega)}+||u-u_h||_{H^\frac{\alpha}{2}(\Omega)}\leq Ch,
\end{equation}
where $C$ is a constant unrelated to $h$.
\end{theorem}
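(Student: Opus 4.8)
The plan is to control each of the three error contributions by inserting the auxiliary quantities $u_h(q)$, $p_h(q)$ and $p_h(u)$ from (\ref{eq24}), (\ref{eq25}) and (\ref{eq31}), and then to invoke the coercivity and continuity of $\varLambda_h$ (together with the equivalence between $\|\cdot\|_{eng}$ and $\|\cdot\|_{H^{\alpha/2}(\Omega)}$) in combination with the already-established rates (\ref{eq27}), (\ref{eq32}) and Lemma \ref{le02}. The guiding observation is that, although the control, state and costate errors are mutually coupled, the bound for $\|q-q_h\|_{L^2(\Omega)}$ furnished by Lemma \ref{le02} is expressed through $p_h(q)$, the discrete costate driven by the \emph{exact} control; consequently the dependency chain terminates at fixed-control convergence rates rather than closing into a circle.

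First I would bound the control error. By Lemma \ref{le02} it suffices to estimate $\|p-p_h(q)\|_{L^2(\Omega)}$. Splitting $p-p_h(q)=(p-p_h(u))+(p_h(u)-p_h(q))$, the first summand is $O(h)$ by (\ref{eq32}) and the domination $\|\cdot\|_{L^2(\Omega)}\leq\|\cdot\|_{H^{\alpha/2}(\Omega)}$. For the second summand I subtract (\ref{eq25}) from (\ref{eq31}) to get $\varLambda_h(p_h(u)-p_h(q),\chi_h)=(u-u_h(q),\chi_h)$; testing with $\chi_h=p_h(u)-p_h(q)$ and using coercivity yields $\|p_h(u)-p_h(q)\|_{H^{\alpha/2}(\Omega)}\leq C\|u-u_h(q)\|_{L^2(\Omega)}\leq Ch$ by (\ref{eq27}). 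Hence $\|p-p_h(q)\|_{L^2(\Omega)}\leq Ch$, and Lemma \ref{le02} delivers $\|q-q_h\|_{L^2(\Omega)}\leq Ch$.

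Next I would treat the state error through $u-u_h=(u-u_h(q))+(u_h(q)-u_h)$, whose first term is $O(h)$ by (\ref{eq27}). For the second, subtracting (\ref{eq17}) from (\ref{eq24}) gives $\varLambda_h(u_h(q)-u_h,\chi_h)=(q-q_h,\chi_h)$, so coercivity and the control bound just obtained give $\|u_h(q)-u_h\|_{H^{\alpha/2}(\Omega)}\leq C\|q-q_h\|_{L^2(\Omega)}\leq Ch$, establishing $\|u-u_h\|_{H^{\alpha/2}(\Omega)}\leq Ch$. The costate error is handled identically: I write $p-p_h=(p-p_h(u))+(p_h(u)-p_h)$, bound the first term by (\ref{eq32}), and for the second subtract (\ref{eq18}) from (\ref{eq31}) to obtain $\varLambda_h(p_h(u)-p_h,\chi_h)=(u-u_h,\chi_h)$, whence coercivity and the state bound yield $\|p_h(u)-p_h\|_{H^{\alpha/2}(\Omega)}\leq C\|u-u_h\|_{L^2(\Omega)}\leq Ch$. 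Adding the three estimates gives the desired bound.

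The main obstacle is the coupling term $\|p-p_h(q)\|_{L^2(\Omega)}$ in Lemma \ref{le02}: one must resist estimating it against $p_h$, which would reintroduce the control error and create circularity, and instead route it through the fixed-control costate $p_h(u)$, so that every residual is ultimately measured by the data differences $u-u_h(q)$ and $u-u_h$ and absorbed by the stability of $\varLambda_h$. Once this decomposition is arranged correctly, the remaining inequalities are routine applications of coercivity and the triangle inequality.
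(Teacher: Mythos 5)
Your argument is correct, and its overall skeleton coincides with the paper's: first close the control error via Lemma \ref{le02} by estimating $\|p-p_h(q)\|$, then propagate to $u-u_h$ and $p-p_h$ through the discrete stability of $\varLambda_h$ (coercivity tested against $u_h(q)-u_h$ and $p_h(u)-p_h$) together with the fixed-control rates (\ref{eq27}), (\ref{eq32}) and the triangle inequality; your steps for the state and costate are essentially identical to (\ref{eq36})--(\ref{eq38}). Where you genuinely diverge is the key sub-step bounding $p-p_h(q)$. The paper attacks it head-on with a Galerkin-orthogonality argument: it inserts the elliptic projection $\mathcal{P}_h p$, invokes continuity and coercivity of $\varLambda_h$, and uses the approximation property of Lemma \ref{le06} to absorb $\|p-\mathcal{P}_h p\|_{H^{\alpha/2}}$, arriving at $\|p-p_h(q)\|_{H^{\alpha/2}}\leq Ch\|p\|_{H^{1+\alpha/2}}+C\|u-u_h(q)\|_{H^{\alpha/2}}$. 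You instead split $p-p_h(q)=(p-p_h(u))+(p_h(u)-p_h(q))$, dispose of the first piece by the already-recorded estimate (\ref{eq32}), and bound the second by subtracting (\ref{eq25}) from (\ref{eq31}) and testing with the (admissible, discrete) difference itself. Your route is arguably cleaner: it avoids Lemma \ref{le06} and the elliptic projection entirely, sidesteps the slightly delicate handling of the consistency term $\varLambda_h(p-p_h(q),p-\mathcal{P}_hp)$ in the paper's chain, and reduces everything to the two black-box facts (\ref{eq27}) and (\ref{eq32}) plus discrete stability. The paper's version is more self-contained in the sense that it re-derives the costate convergence from scratch rather than citing (\ref{eq32}), but since the paper itself asserts (\ref{eq32}) anyway, nothing is lost. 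Both yield the same $O(h)$ conclusion under the same regularity hypotheses.
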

\begin{proof}
Multiplying Eq. (\ref{eq04}) by $\chi_h\in \mathcal{V}_h$ and subtracting Eq. (\ref{eq25}), we have
\begin{align*}
  C||p-p_h(q)||^2_{eng}&\leq \varLambda_h(p-p_h(q),p-p_h(q))\\
                   &\leq \varLambda_h(p-p_h(q),\mathcal{P}_hp-p_h(q))+\varLambda_h(p-p_h(q),p-\mathcal{P}_hp)\\
                   &= (u-u_h(q),\mathcal{P}_hp-p_h(q))\\
                   &= (u-u_h(q),p-p_h(q))+(u-u_h(q),\mathcal{P}_hp-p),
\end{align*}
by taking $\chi_h=p-p_h(q)$ in both two equations. 

Using the equivalence of $||\cdot||_{eng}$ and $||\cdot||_{H^{\frac{\alpha}{2}}(\Omega)}$ and Lemma \ref{le06}, there exists
\begin{align*}
    ||p-p_h(q)||^2_{H^\frac{\alpha}{2}(\Omega)}&\leq \delta||p-p_h(q)||^2_{L^2(\Omega)}+C_\delta||u-u_h(q)||^2_{L^2(\Omega)}
              + C_\delta||p-\mathcal{P}_hp||^2_{H^{\frac{\alpha}{2}}}\\
              &\leq \delta||p-p_h(q)||^2_{L^2(\Omega)}+C_\delta||u-u_h(q)||^2_{L^2(\Omega)}+ Ch^2||p||^2_{H^{1+\frac{\alpha}{2}}(\Omega)},
\end{align*}
with $0<\delta\ll 1$, which implies
\begin{align*}
    ||p-p_h(q)||_{H^\frac{\alpha}{2}(\Omega)}&\leq Ch||p||_{H^{1+\frac{\alpha}{2}}(\Omega)}+C||u-u_h(q)||_{H^{\frac{\alpha}{2}}}.
\end{align*}
Combining with (\ref{eq27}) and Lemma \ref{le02}, we obtain
\begin{align}
\begin{aligned}\label{eq35}
  ||q-q_h||_{L^2(\Omega)}\leq Ch+C||p-p_h(q)||_{H^\frac{\alpha}{2}(\Omega)}
                         \leq Ch+C||u-u_h(q)||_{L^2(\Omega)}\leq Ch.
\end{aligned}
\end{align}

Subtracting Eq. (\ref{eq17}) from Eq. (\ref{eq24}) and taking $\chi_h=u_h(q)-u_h$, it holds that
\begin{align*}
  C||u_h(q)-u_h||^2_{eng}&\leq \varLambda_h(u_h(q)-u_h,u_h(q)-u_h)=(q-q_h,u_h(q)-u_h)\\
                   &\leq ||q-q_h||_{L^2(\Omega)}||u_h(q)-u_h||_{eng}.
\end{align*}
Then, based on the error bound of $q-q_h$, we can get
\begin{align}\label{eq36}
  ||u_h(q)-u_h||_{H^{\frac{\alpha}{2}}(\Omega)}&\leq C ||q-q_h||_{L^2(\Omega)}\leq Ch,
\end{align}
and similarly, from the difference of Eqs. (\ref{eq18}) and (\ref{eq31}), it follows that
\begin{align}\label{eq37}
  ||p_h(u)-p_h||_{H^{\frac{\alpha}{2}}(\Omega)}&\leq C ||u-u_h||_{L^2(\Omega)}.
\end{align}
Using (\ref{eq27}), (\ref{eq32}), (\ref{eq36}), (\ref{eq37}) and triangle inequality, we obtain
\begin{align}
\begin{aligned}\label{eq39}
  ||u-u_h||_{H^{\frac{\alpha}{2}}(\Omega)}&\leq||u-u_h(q)||_{H^{\frac{\alpha}{2}}(\Omega)}+||u_h(q)-u_h||_{H^{\frac{\alpha}{2}}(\Omega)}\\
                         &\leq Ch||u||_{H^{1+\frac{\alpha}{2}}(\Omega)}+C||q-q_h||_{L^2(\Omega)}\leq Ch,
\end{aligned}\\
\begin{aligned}\label{eq38}
 ||p-p_h||_{H^{\frac{\alpha}{2}}(\Omega)}&\leq ||p-p_h(u)||_{H^{\frac{\alpha}{2}}(\Omega)}+ ||p_h(u)-p_h||_{H^{\frac{\alpha}{2}}(\Omega)}\\
                         &\leq Ch||p||_{H^{1+\frac{\alpha}{2}}(\Omega)}+ C ||u-u_h||_{H^{\frac{\alpha}{2}}}\leq Ch.
\end{aligned}
\end{align}

Consequently, combining (\ref{eq35}), (\ref{eq39}) and (\ref{eq38}), we obtain the priori error estimate.
\end{proof}

\section{Illustrative test} \label{s5}

In this section, to illustrate the accuracy performance of the proposed FE scheme,
numerical tests are carried out and numerical results are presented.
For solving the coupled system (\ref{eq17})-(\ref{eq20}), we adopt the fixed-point iterative algorithm
and terminate iterative loop by reaching a solution $q_h$ with tolerant error 1.0e-12.
We employ piecewise linear interpolation to approximate $p$, $u$ and
variational discretization method to discretize $q$.
Meanwhile, denote
\begin{align*}
Cov.\ order=
\log_{2}\Bigg(\frac{e_{h_1}}{e_{h_2}}\Bigg)
      \Bigg/\log_{2}\Bigg(\frac{h_1}{h_2}\Bigg),
\end{align*}
where $e_{h_k}$ is the global error corresponding to the meshsize $h_k$, $k=1,\ 2$.\\

\noindent
\textbf{Example 1.}
Letting $\kappa_1=\kappa_2=1$, $\gamma=1$ and $\mathscr{K}=\{q\in L^2(\Omega):\ -3\leq q(x,y)\leq -0.1 \}$,
consider the problem on $\Omega=(0,1)\times(0,1)$ with the analytic solutions
\begin{align*}
  u&=10x(1-x)y(1-y),\\
  p&=5x(1-x)y(1-y), \\
  q&=\max\big\{-3,\min\{-p,-0.1\}\big\},
\end{align*}
where $g$, $u_d$ are determined by $u$, $p$ and $q$ via the model of OCPs.

We evaluate the global error at coarse mesh and then refine the
mesh several times. In Fig. \ref{fig1}, we show the decline behavior of error for the control,
state and adjoint state variables with different $\alpha$ in log-log scale. In Fig. \ref{fig2}, we
present an unstructured mesh of $h=1/25$ and compare the analytic solution with the  approximation
of state when $\alpha=1.9$. To obtain more insight about
accuracy, letting $\alpha=1.3$, we compute the error with different
$h$ and report the convergent order for control, state and adjoint state variables in
Table \ref{tab1}. From the above results, we observe that our method is almost convergent
with theoretical order and yields the solution indistinguishable from the analytic solution, which
confirm the convergent accuracy and theoretical analysis.  \\

\begin{figure*}[!htb]
\centering
\begin{minipage}[t]{0.32\linewidth}
\includegraphics[width=1.8in]{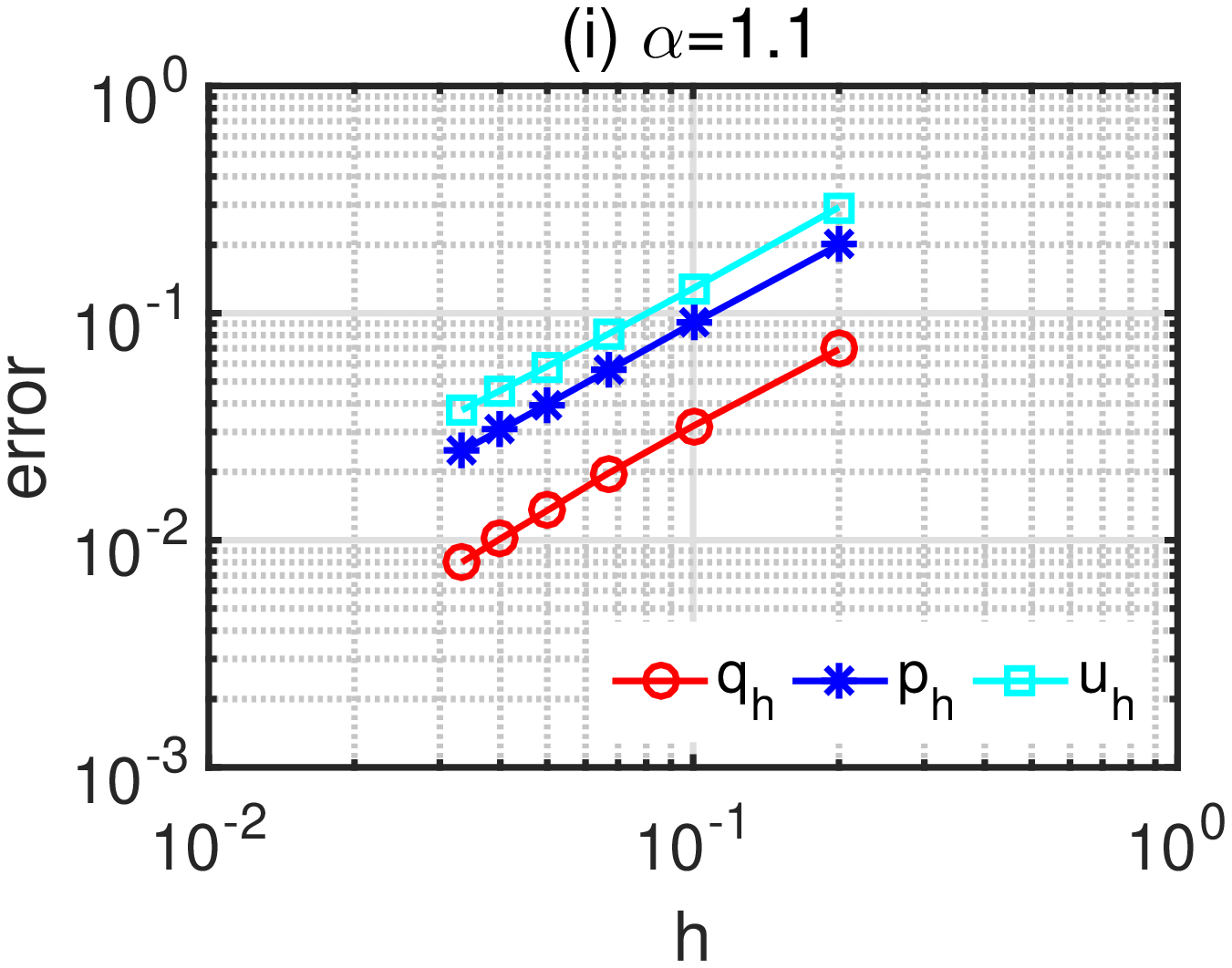}
\end{minipage}
\begin{minipage}[t]{0.32\linewidth}
\includegraphics[width=1.8in]{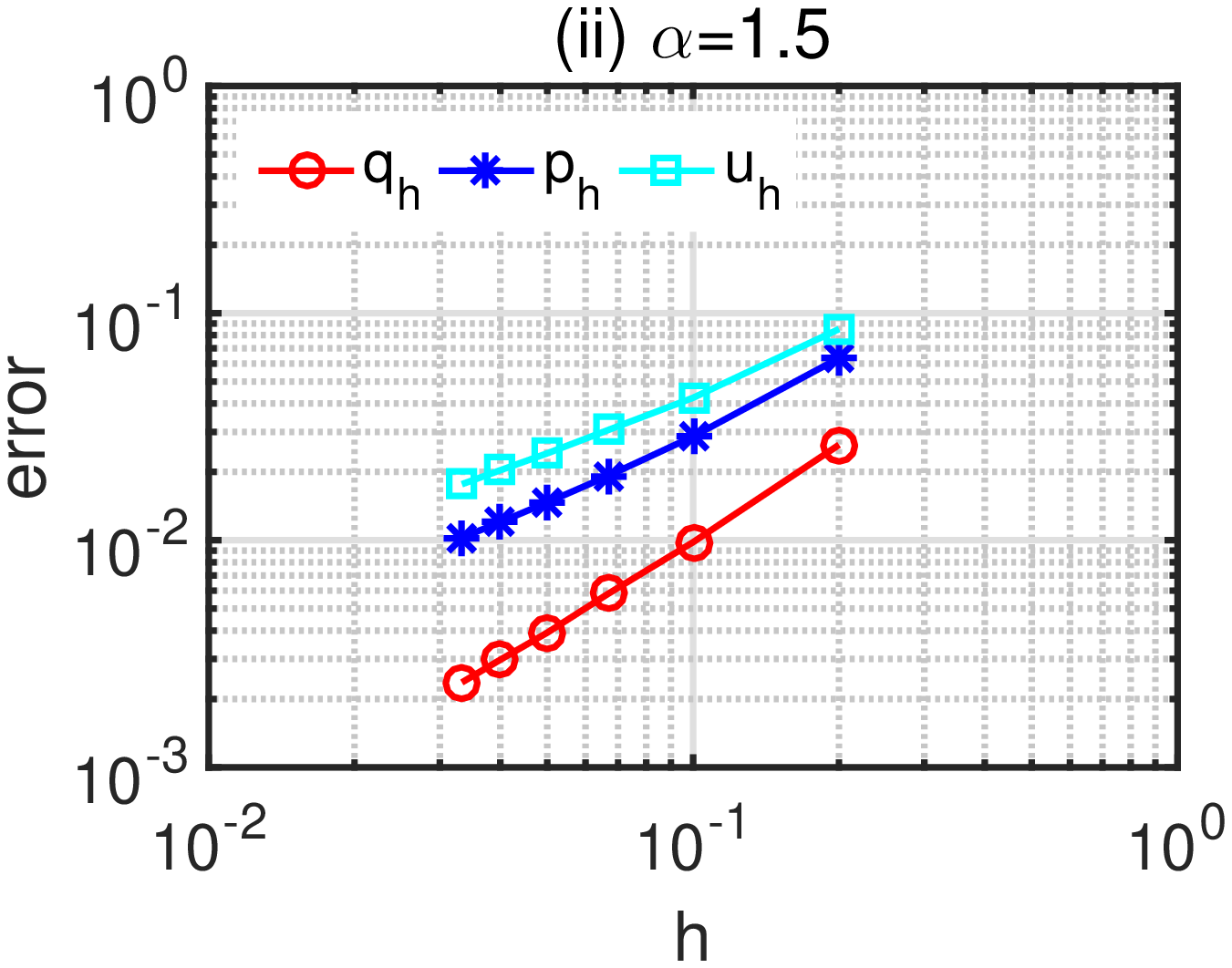}
\end{minipage}
\begin{minipage}[t]{0.32\linewidth}
\includegraphics[width=1.8in]{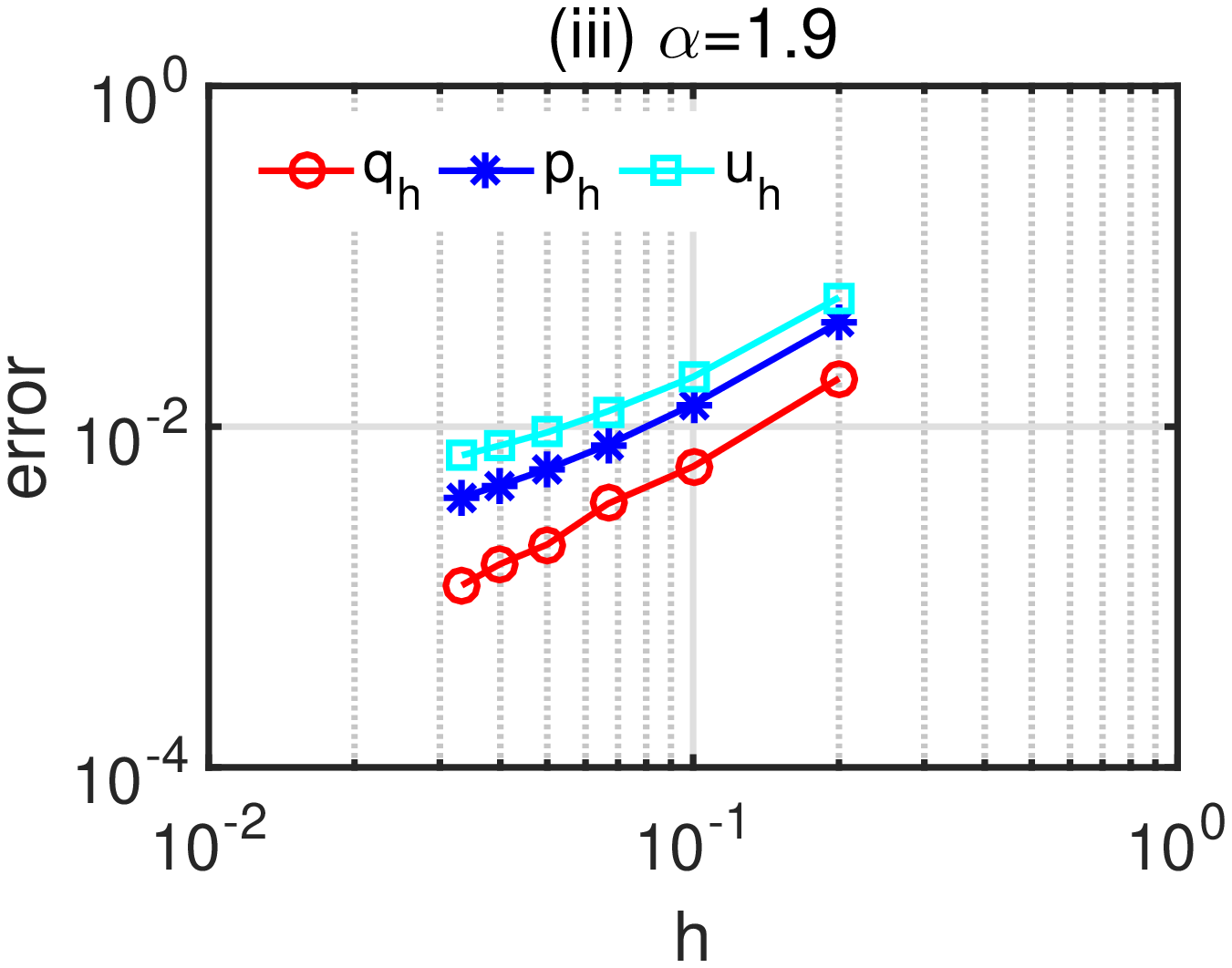}
\end{minipage}
\caption{Evolution of error versus the change of $h$ for different $\alpha$:
                  (i) $\alpha=1.1$; (ii) $\alpha=1.5$; (iii) $\alpha=1.9$.}\label{fig1}
\end{figure*}

\begin{figure*}[!htb]
\centering
\begin{minipage}[t]{0.32\linewidth}
\includegraphics[width=1.9in]{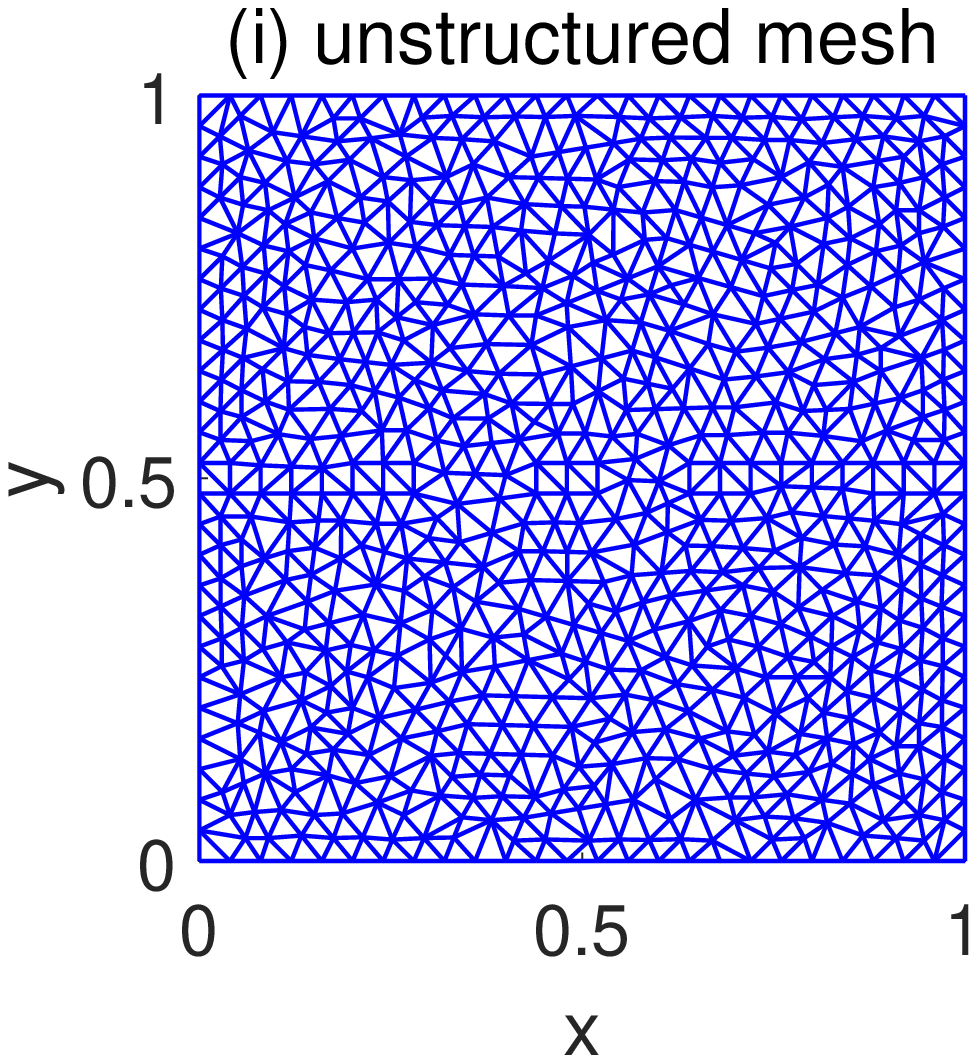}
\end{minipage}
\begin{minipage}[t]{0.33\linewidth}
\includegraphics[width=1.9in]{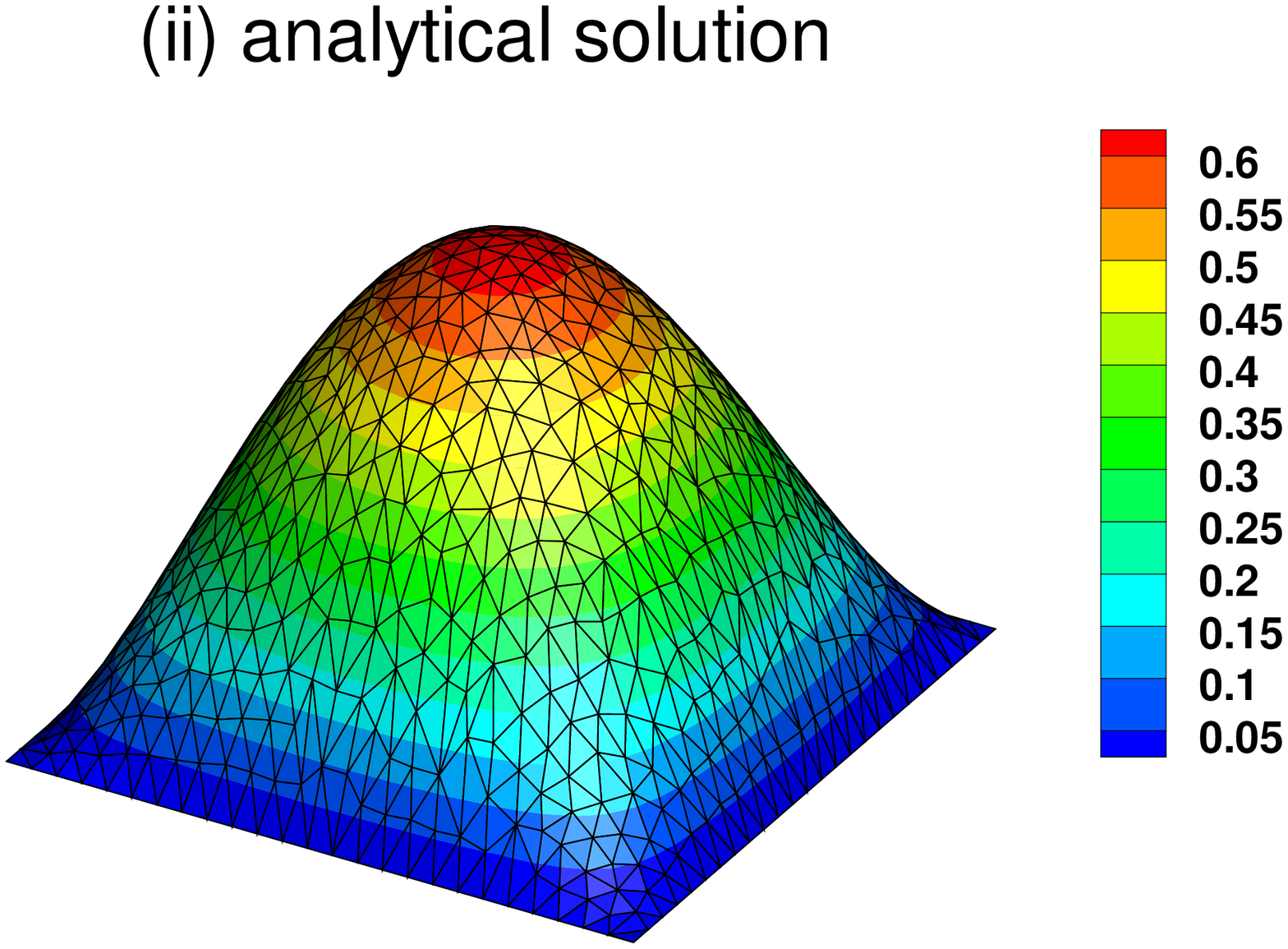}
\end{minipage}
\begin{minipage}[t]{0.32\linewidth}
\includegraphics[width=1.9in]{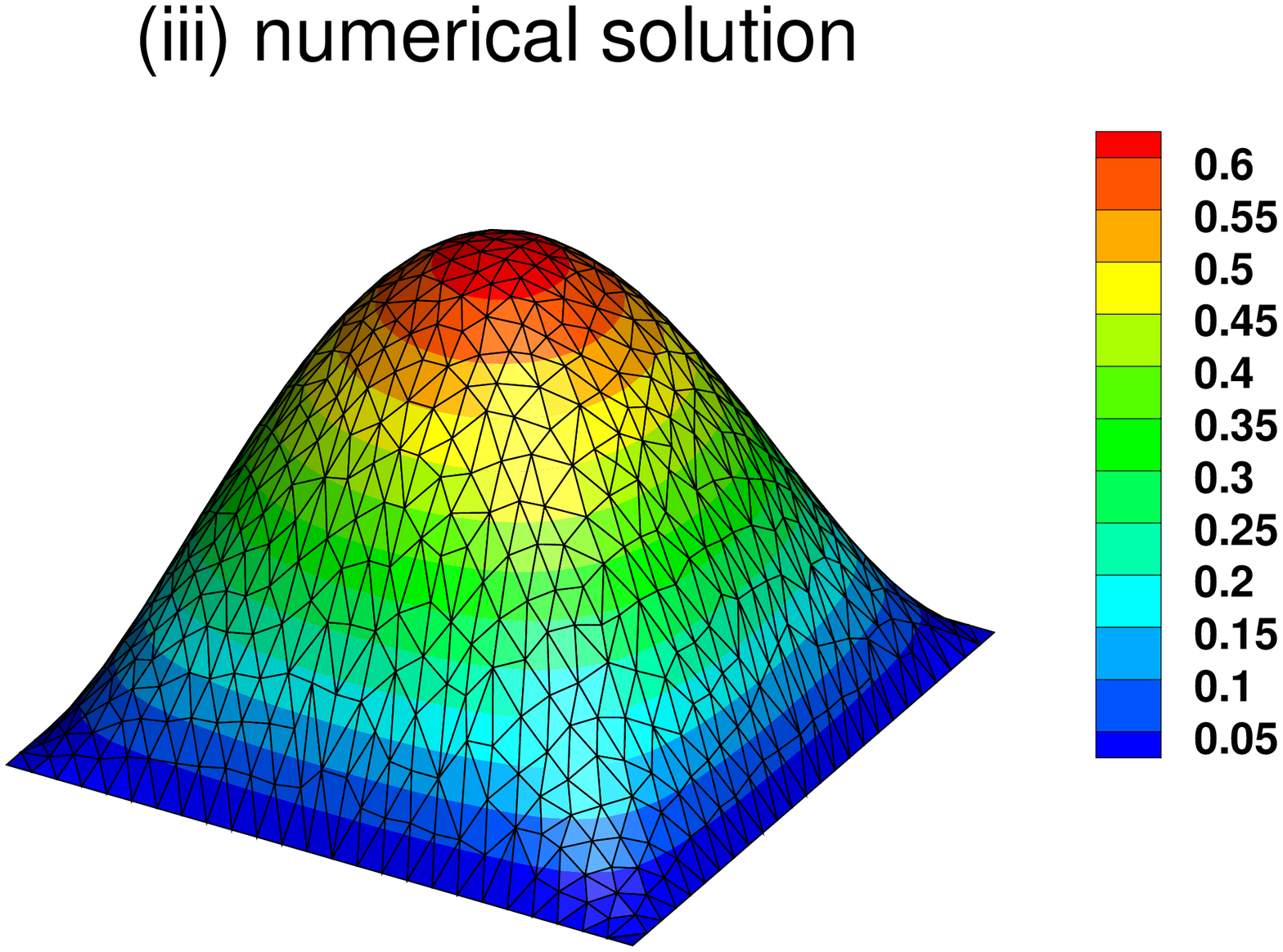}
\end{minipage}
\caption{Unstructured triangular mesh, analytical and numerical solutions of state: (i) $h=1/25$;
             (ii) $u$; (iii) $u_h$.}\label{fig2}
\end{figure*}

\begin{table*}[!htb]
\centering
\caption{The global error and convergent order for $q_h$, $p_h$ and $u_h$ when $\alpha=1.3$. } \label{tab1}
\begin{tabular}{ccccccc}
\toprule      $h$  & $||q-q_h||_{L^2(\Omega)}$  & $Cov.\ order$  & $||p-p_h||_{H^\frac{\alpha}{2}(\Omega)}$
                 & $Cov.\ order$  & $||u-u_h||_{H^\frac{\alpha}{2}(\Omega)}$  & $Cov.\ order$\\
\hline  1/10   &1.4265e-02  &-      &4.0697e-02  &-     &5.8055e-02  &-    \\
        1/15   &8.4741e-03  &1.28   &2.6194e-02  &1.09  &3.9514e-02  &0.95  \\
        1/20   &5.8169e-03  &1.31   &1.9352e-02  &1.05  &3.0027e-02  &0.96   \\
\bottomrule
\end{tabular}
\end{table*}

\noindent
\textbf{Acknowledgement}:
This research was supported by the Natural Science Foundation of Hunan Province of China (No. 2020JJ5514)
and the Scientific Research Funds of Hunan Provincial Education Department (Nos. 19C1643, 19B509 and 20B532).


\end{document}